\newtheorem{theorem}{Theorem}[section]
       \newtheorem{lemma}[theorem]{Lemma}
       \newtheorem{proposition}[theorem]{Proposition}
    \newtheorem{definition}[theorem]{Definition}
\newcommand{\Z}{\mbox{$\mathbb{Z}$}}
\newcommand{\Q}{\mbox{$\mathbb{Q}$}}
\begin{document}
\title[Gaussian Mersenne Primes]{Gaussian Mersenne Primes  of the form $x^2+dy^2$  }
\author[Sushma Palimar and  Ambedkar Dukkipati]{Sushma Palimar and  Ambedkar Dukkipati}
\email{sushmapalimar@gmail.com \\sushma@csa.iisc.ernet.in \\ ad@csa.iisc.ernet.in}
\address{Dept. of Computer Science \& Automation\\Indian Institute of Science, Bangalore - 560012}

\maketitle
 \begin{abstract}
 In this paper we study  Gaussian ring $\Z[i]$ with a focus on representing Gaussian Mersenne primes $G_p$ in the form $x^2+7y^2$.
Interestingly  when such a form exists, one can observe that,   $x\equiv \pm 1\pmod{8}$ and $y\equiv 0\pmod{8}$.
To prove this property of Gaussian Mersenne primes, we show that Gaussian Mersenne primes splits completely in 
the  cyclic quartic unramified  extension of $\Q(\sqrt{-14})$ and have a trivial Artin symbol in this extension.
We generalize this result for $d\equiv 7\pmod{24}$.
We also attempt to give an alternate proof  using Artin's reciprocity law,
which was earlier given by H. W. Lenstra and P. Stevenhagen to prove a similar property on ordinary Mersenne Primes.
\end{abstract}

\footnote{Research by the first  author
is supported by National Board for Higher Mathematics-Post Doctoral Fellowship, Government of India.}


\section{Introduction}	
The study of prime numbers in the form  $x^2+dy^2$  for  
a fixed integer $d>0$ is ancient and formal study of this problem  began with  Fermat, 
later continued by Euler, which led him to discover quadratic reciprocity and many conjecture on $x^2+dy^2$  for $d>3$.
The book ~\cite{6} is a good reference for this.
Study of  Mersenne primes in the form $x^2+dy^2$, for  $d\in
[2,48]$ is carried out in detail in
\cite{Jansen:2002:mersenneprimes}.  
In \cite{henstev:2000:artinreci} Lenstra and Stevenhagen prove an interesting property of Mersenne primes 
which was first observed by Lemmermeyer.
This observation is given in the form of Theorem below. Here 
authors make use of Artin's reciprocity to demonstrate this property of Mersenne primes.
\begin{theorem}\label{mpthm}
Let $M_p$  be a Mersenne prime with   $p\equiv 1\pmod{3}$.
Write $M_p=x^2+7y^2$  with $x,y\in \Z$. Then $x$ is divisible by  $8$.
 \end{theorem}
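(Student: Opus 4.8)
The plan is to combine an elementary congruence, which does half the work, with a class field theory computation that does the rest. First I would record that the hypothesis $p\equiv 1\pmod 3$ is exactly what makes $M_p$ representable: since $2$ has order $3$ modulo $7$, one gets $2^{p}\equiv 2\pmod 7$ and hence $M_p\equiv 1\pmod 7$, so $\left(\frac{-7}{M_p}\right)=1$ and $M_p$ splits in $K=\Q(\sqrt{-7})$. Because $x^2+7y^2$ is the unique reduced form of discriminant $-28$, this splitting already forces a representation $M_p=x^2+7y^2=N(\pi)$ with $\pi=x+y\sqrt{-7}\in\Z[\sqrt{-7}]$. Reducing modulo $8$ and using $M_p\equiv 7\pmod 8$ (valid for $p\ge 3$), the only surviving possibility is that $y$ is odd and $x\equiv 0\pmod 4$. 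This is as far as congruences can take us: a direct check modulo $2^{k}$ shows that both $x\equiv 0$ and $x\equiv 4\pmod 8$ stay compatible with the value of $M_p$ modulo any power of $2$, so upgrading $4\mid x$ to $8\mid x$ cannot be elementary and must exploit the arithmetic of $M_p=2^{p}-1$ itself.

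The second step is to realize the finer condition $8\mid x$ as a splitting condition. I would introduce a governing field $\Omega$, the cyclic quartic extension of $K$ unramified outside $2$ cut out by the appropriate quartic character of the ray class group of $K$ of $2$-power conductor, the analogue for $\Q(\sqrt{-7})$ of the cyclic quartic unramified extension of $\Q(\sqrt{-14})$ used in the Gaussian case. The content here is the dictionary $8\mid x\iff\left(\frac{\Omega/K}{\mathfrak q}\right)=1$, i.e.\ the prime $\mathfrak q=(\pi)$ above $M_p$ splits completely in $\Omega$; one proves it by translating the ray class character into a congruence on the generator $\pi$ modulo a power of $\mathfrak 2$ and matching it against $x\bmod 8$. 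A structural point that explains the failure of the elementary approach is that, although $\Omega/K$ is abelian, $\Omega/\Q$ is not: complex conjugation acts on $\mathrm{Gal}(\Omega/K)\cong\Z/4$ by inversion, so $\mathrm{Gal}(\Omega/\Q)$ is nonabelian of order $8$ and $\mathrm{Frob}_{M_p}$ is genuinely not a congruence condition on $M_p$.

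The heart of the proof is to compute $\left(\frac{\Omega/K}{\mathfrak q}\right)$ using the identity $2^{p}=M_p+1\equiv 1\pmod{M_p}$ together with Artin's reciprocity law, exactly as Lenstra and Stevenhagen do for ordinary Mersenne primes. Since $2=\mathfrak 2\bar{\mathfrak 2}$ splits in $K$ with $\mathfrak 2$ principal, the relation $M_p\equiv -1\pmod{\mathfrak 2^{\,p}}$ places $\mathfrak q$ in a controlled class of the ray class group. Artin reciprocity, in the form of the vanishing of the product of local symbols attached to the principal idele of $\pi$, then lets me transfer the symbol at the large prime $\mathfrak q$ to the completely understood split prime $\mathfrak 2$, where the contribution is governed by $p$ modulo the ($2$-power) order of $\mathfrak 2$ in the relevant ray class group. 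Tracking this contribution through the arithmetic of the odd prime $p$ makes the symbol trivial, so $\mathfrak q$ splits completely in $\Omega$ and $8\mid x$ follows.

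The main obstacle I anticipate is the middle step: pinning down $\Omega$ precisely, namely the correct $2$-power conductor and the correct quartic character, and proving the equivalence $8\mid x\iff\left(\frac{\Omega/K}{\mathfrak q}\right)=1$ without sign or normalization errors, since $\pi$ is only determined up to the units $\pm 1$ and up to conjugation. The accompanying delicate bookkeeping is the exact way $p$ enters the local symbol at $\mathfrak 2$: once $\Omega$ and the embedding of $K$ into $\Q_2$ are fixed correctly, the reciprocity identity forces the answer, but matching the local contribution at $\mathfrak 2$ with the parity and residue of $p$ so that the global symbol comes out trivial is where the real care is required.
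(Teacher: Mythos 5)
Your architecture --- elementary congruences for half the statement, then a governing extension of $K=\Q(\sqrt{-7})$ whose Artin symbol at the prime $(\pi)$ above $M_p$ encodes the remaining bit, evaluated by Artin reciprocity together with $2^{p}=M_p+1\equiv 1\pmod{M_p}$ --- is exactly the Lenstra--Stevenhagen strategy; note that the paper does not reprove Theorem~\ref{mpthm} but quotes it from \cite{henstev:2000:artinreci} and only sketches the machinery (the field $L=K(\sqrt{-\omega})$ with $\omega=\frac{1+\sqrt{-7}}{2}$, the map $\mathcal{O}_K\to\Z/8\Z$, $\omega\mapsto-2$, and the compositum $N=K(\sqrt{-\omega},\sqrt{-\bar\omega})$). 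Measured against that, your proposal has a genuine gap precisely at the step you flag as the crux: the governing field is misidentified. The correct extension of $K$ is the \emph{quadratic} field $L=K(\sqrt{-\omega})$ of conductor $(\omega)^3$ (equivalently the biquadratic compositum $N$, whose Galois group over $K$ is Klein four); the cyclic quartic structure lives only over $\Q(\sqrt{-14})$, and it is over that field, not over $K$, that complex conjugation inverts a $\Z/4$. A genuinely quartic character of $2$-power conductor on $K$ has conductor $(\omega)^4$, and its triviality at $(\pi)$ means $\pi\equiv\pm1\pmod{\omega^4}$, a condition modulo $16$ that is false for Mersenne primes: for $M_7=127=8^2+7\cdot3^2$ one has $8+11\cdot3\equiv 9\pmod{16}$ (using $\omega\mapsto 6$, $\sqrt{-7}\mapsto 11$ mod $16$), so the quartic symbol is nontrivial even though $8\mid x$. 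Your proposed dictionary ``$8\mid x$ iff the quartic symbol is trivial'' therefore fails; the correct dictionary uses the quadratic character, i.e.\ $\pi\equiv\pm1\pmod{\omega^3}$, i.e.\ $x+3y\equiv\pm1\pmod 8$.

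There is a second, related gap: your elementary step stops too early for even the corrected dictionary to close the argument. From $M_p\equiv 7\pmod 8$ you get $4\mid x$ and $y$ odd, but then $x+3y\equiv\pm1\pmod 8$ does \emph{not} imply $8\mid x$ --- it is also satisfied by $x\equiv 4$, $y\equiv\pm1\pmod 8$. You must first use $M_p\equiv 15\pmod{16}$ to force $y\equiv\pm3\pmod 8$; only then does triviality of the quadratic Artin symbol at both primes over $M_p$ become equivalent to $8\mid x$. With these two corrections the rest of your outline (the class number one computation for discriminant $-28$, and the transfer of the symbol to the split prime above $2$ via the product formula and the fact that $2$ has odd order $p$ modulo $M_p$) is sound and is what Lenstra and Stevenhagen actually carry out.
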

In \cite{pal:2012:jis}, authors call $\alpha=(2+\sqrt{2})$ and the norm $N  (\frac{\alpha^{p}-1}{\alpha-1})$ is found.
In this case whenever the norm $N(\frac{\alpha^{p}-1}{\alpha-1})$ is a rational prime, then those primes are 
quadratic residues of $7$ hence can be written in the form 
$x^{2}+7y^{2}$ and it is proved using Artin's reciprocity that, $x$ is divisible by $8$ and $y\equiv \pm 3\pmod{8}$.

 In Section \ref{Gp}  we  study  Gaussian Mersenne primes 
in the form $x^2+dy^2$ with a special focus on  the form $x^2+7y^2$. 
Here we prove that whenever $G_p=x^2+7y^2$ then $x\equiv \pm 1\pmod{8}$ and $y\equiv 0\pmod{8}$
and  generalize this result  
for  all $d\equiv 7\pmod{24}$ in the next section. 
\subsection{ Gaussian Mersenne Primes}Gaussian Mersenne primes and their primality test  was first studied by 
Berrizbeitia and  Iskra  in \cite{1}.   
A Gaussian Mersenne number is an element of $\Z[i]$
given  by $\mu_p=(1\pm i)^{p}-1$, for some rational prime $p$.
The Gaussian Mersenne norm  $G_p=2^{p}-\left(\frac{2}{p}\right)2^{\frac{p+1}{2}}+1$ 
is  the norm of $\mu_p$,  $N(\mu_p)$. If $G_p$ is a 
 rational prime then we call  $G_p$  a Gaussian Mersenne prime.
 \subsection{Some results from Class Field Theory}
 \label{cft}
 We take a short and quick revision of class field theory to fix   notations.
  Gal(L/K) is  the Galois group of the field extension $K\subset L$. $\mathcal{O}_{K}$, 
the ring of algebraic integers in a finite extension $K$ of $\Q$.
 We denote $I_{K}(\Delta(L/K))$ to  be the group of all fractional ideals of $\mathcal{O}_{K}$   prime to $\Delta(L/K)$.
 The \emph{Artin symbol}  is denoted  by $((L/K)/\mathfrak{P})$, for  $\mathfrak{P}$, a prime of  $L$ containing a prime 
 $\mathfrak{p}$ of $\mathcal{O}_{K}$. Then for any $\alpha$ in  $\mathcal{O}_{L}$ 
 \begin{displaymath}
 \left( \frac{L/K}{\mathfrak{P}}\right)(\alpha)\equiv
\alpha^{N(\mathfrak{p})} \pmod{\mathfrak{P}},
\end{displaymath}
When $K\subset L$ is an Abelean extension, the Artin symbol can be written as $((L/K)/\mathfrak{p})$.
\begin{definition}
 Let $K$ be a number field and $\mathfrak{m}$  a modulus of $K$. We define $P_{K,1}(m)$ 
 to be the subgroup of $I_K(m)$ generated by the principal ideals
 $\alpha.\Z_K$, with $\alpha\in \Z_{K}$, satisfying $\alpha\equiv 1\bmod{ \mathfrak{m}_0}$, 
 where $\mathfrak{m}_0$ is a nonzero $\mathcal{O}_{K}$ ideal and 
 $\sigma(\alpha)>0$ for all infinite primes
 $\sigma$ of $K$ dividing $\mathfrak{m}_\infty $, is a product of  
   distinct real infinite primes of $K$. Thus $Cl_{m}$ is the group $I_{K}(m)/P_{K,1}(m)$.
\end{definition}
Given an order $\mathcal{O}$ in a quadratic field $K$, the index $f=[\mathcal{O}:\mathcal{O}_{K}]$ 
 is called   conductor of the order.  The discriminant  of  $\mathcal{O}$ is \(D=f^2d_k\),  $d_k$ is the discriminant of the
 maximal order $\mathcal{O}_{K}$.
\begin{definition}
 Let $K$ be a number field and $R=R_{m}(K)$ the ray class field of $K$ with modulus $m$.
 The ray class field of conductor $\mathfrak{m}=(1)$ is the Hilbert class field $H=H_1$ of $K$. 
\end{definition}
 \begin{definition}[Ring Class Field]\label{isodef}
 If $\mathcal{O}$ is the order of $\mathcal{O}_{K}$ of  conductor  $f$, we define ring class group to be 
 $I_K(f\mathcal{O}_K)/P_{K,\mathbb{Z}}(f\mathcal{O}_K)$, which is naturally isomorphic to the group of 
 ideals prime to $f$ modulo the principal ideals prime to $f$. The ring class
 field of the maximal order $\mathcal{O}_{K}$ is the Hilbert class
 field of $K$. 
 \end{definition}
  Here, the  bottom group $P_{K,\mathbb{Z}}(f\mathcal{O}_K)$ consists  of
 the principal ideals which admit a generator $\alpha$ congruent to
 some integer $a$, relatively prime to $f$,  
 i.e., \[ \alpha\equiv a\pmod{f\mathcal{O}_{K}}\] 
 We have $m=[\mathcal{O}_{K}:\mathcal{O}]=1,2,3,4,6$ if and only if 
$P_{K,1}(m)=P_{K,\mathbb{Z}}(m)$.  
This is because, $m=1,2,3,4,6$ is nothing but $\phi(m)\leq 2$.
And these are the cases where the global units 1 and --1 `fill up' $(\Z/m\Z)^*$,
and the ray class field equals the ring class field.
 \section{Gaussian Mersenne primes as $x^2+dy^2$} \label{Gp}
 In this section we derive two  properties of Gaussian Mersenne primes and prove our main result that, 
whenever $G_{p}=x^{2}+7y^{2}$ then  $x\equiv \pm 1\pmod{8}$ and $y\equiv 0\pmod{8}$.  
 \begin{definition}
  We define $F_d$ to be the quadratic form $x^2+dy^2\in \Z[x,y]$. Let
  $p$ be a prime then $p$ is represented by  
  $F_d$ if there exists $x>0$ and $y>0$ in $\Z$ such that $p=x^2+dy^2$.
 \end{definition}
\begin{proposition}\label{drem3}
 Let $d\equiv 3\bmod{4}$ be a square-free integer. Suppose
 $L=R_{2}(\Q(\sqrt{-d}))(\sqrt{2})=H(\Q(\sqrt{-2d}))$. Let 
 $G_{p}$ be a Gaussian Mersenne prime unramified in $L$ then  $F_d$
 represents $G_{p}$ if and only if  $F_{2d}$ represents $G_{p}$.
\end{proposition}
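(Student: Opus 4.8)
The plan is to translate both representation conditions into complete--splitting conditions for the rational prime $G_p$ in suitable ring class fields, using the classical theory of primes of the form $x^2+ny^2$ (see \cite{6}), and then to bridge the two using the compositum structure $L=R_2(\Q(\sqrt{-d}))(\sqrt{2})$ together with an elementary congruence for $G_p$. The guiding principle I would invoke is that, for a rational prime $q\nmid n$, $q$ is represented by $x^2+ny^2$ if and only if $q$ splits completely in the ring class field of the order $\Z[\sqrt{-n}]$ in $\Q(\sqrt{-n})$.

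First I would identify the two ring class fields correctly, and this is the step where the hypothesis $d\equiv 3\bmod 4$ does its work. Applying the principle with $n=d$: since $-d\equiv 1\bmod 4$, the order $\Z[\sqrt{-d}]$ has conductor $2$ in $\Q(\sqrt{-d})$, and by the fact recorded in Section~\ref{cft} that for $m=2$ one has $P_{K,1}(m)=P_{K,\Z}(m)$, its ring class field coincides with the ray class field $R_2(\Q(\sqrt{-d}))$. Hence $F_d$ represents $G_p$ if and only if $G_p$ splits completely in $R_2(\Q(\sqrt{-d}))$. Applying the principle with $n=2d$: because $d\equiv 3\bmod 4$ is odd and squarefree we get $-2d\equiv 2\bmod 4$ with $2d$ squarefree, so $\Z[\sqrt{-2d}]$ is already the \emph{maximal} order of $\Q(\sqrt{-2d})$ and its ring class field is therefore the Hilbert class field $H(\Q(\sqrt{-2d}))$. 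Hence $F_{2d}$ represents $G_p$ if and only if $G_p$ splits completely in $H(\Q(\sqrt{-2d}))$.

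The bridge between the two conditions is the field $L$. I would compute $G_p\bmod 8$ directly from $G_p=2^{p}-\left(\frac{2}{p}\right)2^{(p+1)/2}+1$: for $p\ge 5$ both $2^{p}$ and $2^{(p+1)/2}$ vanish modulo $8$, so $G_p\equiv 1\pmod 8$, whence $\left(\frac{2}{G_p}\right)=1$ and $G_p$ splits completely in $\Q(\sqrt{2})$. Now $L$ is the compositum $R_2(\Q(\sqrt{-d}))\cdot\Q(\sqrt{2})$, and all the fields involved are Galois over $\Q$ (ring class fields are generalized dihedral over $\Q$), so the unramified prime $G_p$ splits completely in $L$ if and only if it splits completely in each of the two factors. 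Since $G_p$ always splits completely in $\Q(\sqrt{2})$, splitting completely in $L$ is equivalent to splitting completely in $R_2(\Q(\sqrt{-d}))$. Combining this with the two criteria above and the hypothesis $L=H(\Q(\sqrt{-2d}))$ yields that $F_d$ represents $G_p$ if and only if $G_p$ splits completely in $L=H(\Q(\sqrt{-2d}))$ if and only if $F_{2d}$ represents $G_p$, which is the assertion.

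The routine verifications are easy: $G_p$ is coprime to $2d$ and avoids the relevant discriminants so that the criterion of \cite{6} applies, and the congruence computation is immediate; the small case $p=3$, where $G_3=13\equiv 5\pmod 8$, is checked by hand and is consistent, since neither form then represents $G_p$. The step requiring most care is the identification of the two ring class fields together with the compositum--splitting equivalence, for it is precisely here that the hypotheses ``$G_p$ unramified in $L$'' and $L=H(\Q(\sqrt{-2d}))$ are used; in particular getting the conductors right --- conductor $2$ for $\Z[\sqrt{-d}]$ versus the maximal order for $\Z[\sqrt{-2d}]$ --- is the point most easily mishandled.
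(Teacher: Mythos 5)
Your proposal is correct and follows essentially the same route as the paper: identify the conductor of $\Z[\sqrt{-d}]$ as $2$ and of $\Z[\sqrt{-2d}]$ as $1$, translate each representation into complete splitting in the corresponding ring class field (which for conductor $2$ equals the ray class field $R_2(\Q(\sqrt{-d}))$), use $G_p\equiv 1\pmod 8$ to absorb the factor $\Q(\sqrt2)$ of the compositum $L$, and conclude via the hypothesis $L=H(\Q(\sqrt{-2d}))$. Your write-up is in fact more explicit than the paper's about why the ring class field of conductor $2$ coincides with the ray class field and about the compositum-splitting step, but the argument is the same.
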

This property of Gaussian Mersenne primes stated in Proposition \ref{drem3} is obeyed by Mersenne primes too, and the proof
 is similar to the proof given in  \cite{Jansen:2002:mersenneprimes}. 
\begin{proof}
 We have
 $G_{p}=2^{p}-\left(\frac{2}{p}\right)2^{\frac{p+1}{2}}+1$. For
 $p>3$, $G_{p}\equiv1\bmod{8}$. Hence, 
$G_{p}$ splits completely in $\Q(\sqrt{2})$ and 
the prime $G_{p}$ is unramified in $L$,
 which implies $G_{p}\nmid 2d$. We have,
 $[\mathcal{O}_{K}:\Z[\sqrt{-d}]]=2$, since $d\equiv 3\pmod{4}$ 
is a positive square-free integer. Now, by using the fact $G_{p}$
splits in $\Q(\sqrt{2})$ we get $F_{d}$  
 represents $G_{p}$ if and only if $G_{p}$ splits completely in
 $R_{2}(\Q(\sqrt{-d}))(\sqrt{2})$. 
 For $K=\Q(\sqrt{-2d})$ we have $[\mathcal{O}_{K}:\Z[\sqrt{-2d}]=1$. 
Thus, $F_{2d}$ represents $G_{p}$ if
 and only if $G_{p}$ splits completely in $H(\Q(\sqrt{-2d}))$.
 By assumption we have
 $R_{2}(\Q(\sqrt{-d}))(\sqrt{2})=H(\Q(\sqrt{-2d}))$ hence the result. 
 \end{proof}
 The property of Gaussian Mersenne primes in Proposition \ref{drem1} is unique only to Gaussian Mersenne primes.
 \begin{proposition}\label{drem1}
 Let $d\equiv 1\bmod{4}$ be a square-free integer. Suppose  $L=H(\Q(\sqrt{-d}))=H(\Q(\sqrt{-2d}))$. Let
 $G_{p}$ be a Gaussian Mersenne prime unramified in $L$ then  $F_d$ represents $G_{p}$ if and only if 
 $F_{2d}$ represents $G_{p}$, which is not the case for usual Mersenne primes.
\end{proposition}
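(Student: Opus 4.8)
The plan is to follow the pattern of Proposition \ref{drem3}, but to use that for $d\equiv 1\pmod 4$ no conductor-$2$ correction intervenes, so that the relevant ring class fields are Hilbert class fields outright. The single external input I would invoke is the representation criterion of \cite{6}: for a positive integer $n$ and a rational prime $q$ not dividing the discriminant $-4n$, one has $q=x^2+ny^2$ if and only if $q$ splits completely in the ring class field of the order $\Z[\sqrt{-n}]$ of $\Q(\sqrt{-n})$. The whole proof then reduces to identifying these ring class fields for $n=d$ and $n=2d$ and comparing splitting in them.

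First I would record that $G_p\equiv 1\pmod 8$ for $p>3$, so $G_p$ is odd and splits completely in $\Q(\sqrt 2)$; combined with the hypothesis that $G_p$ is unramified in $L$ this gives $G_p\nmid 2d$, so the criterion applies to both $F_d$ and $F_{2d}$. The decisive step is the conductor computation, and this is exactly where the present case parts company with $d\equiv 3\pmod 4$: since $d$ is squarefree with $d\equiv 1\pmod 4$ we have $-d\equiv 3\pmod 4$, so $\Z[\sqrt{-d}]=\mathcal{O}_{\Q(\sqrt{-d})}$ with $[\mathcal{O}_{\Q(\sqrt{-d})}:\Z[\sqrt{-d}]]=1$; similarly $-2d\equiv 2\pmod 4$ gives $\Z[\sqrt{-2d}]=\mathcal{O}_{\Q(\sqrt{-2d})}$ with index $1$. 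Both orders being maximal, Definition \ref{isodef} identifies the ring class field of $\Z[\sqrt{-d}]$ with $H(\Q(\sqrt{-d}))$ and that of $\Z[\sqrt{-2d}]$ with $H(\Q(\sqrt{-2d}))$, with no adjoined $\sqrt 2$.

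With these identifications the criterion reads: $F_d$ represents $G_p$ if and only if $G_p$ splits completely in $H(\Q(\sqrt{-d}))$, and $F_{2d}$ represents $G_p$ if and only if $G_p$ splits completely in $H(\Q(\sqrt{-2d}))$. The standing hypothesis $H(\Q(\sqrt{-d}))=H(\Q(\sqrt{-2d}))=L$ collapses both to the single condition that $G_p$ split completely in $L$, which yields the equivalence. As a consistency check I would note that $\Q(\sqrt 2)\subseteq L$, because both $\sqrt{-d}$ and $\sqrt{-2d}$ lie in $L$ and their ratio is $\sqrt 2$; thus the congruence $G_p\equiv 1\pmod 8$ is a necessary precondition for $G_p$ to split completely in $L$, and it is automatically met.

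The step I expect to demand the most care is the closing clause, that this fails for ordinary Mersenne primes. I would emphasise that, once $H(\Q(\sqrt{-d}))=H(\Q(\sqrt{-2d}))$ is granted, the biconditional is a formal consequence of the criterion and holds for every prime unramified in $L$ that splits in $\Q(\sqrt 2)$; and since $M_p\equiv 7\equiv -1\pmod 8$ also splits in $\Q(\sqrt 2)$, the contrast cannot reside in the quadratic splitting. The genuine difference is that $G_p\equiv 1\pmod 4$ while $M_p\equiv 3\pmod 4$, so $\left(\frac{-1}{G_p}\right)=1$ but $\left(\frac{-1}{M_p}\right)=-1$. Pinning down precisely which feature the clause contrasts, and supporting it, for instance by tracking $\left(\frac{-1}{\,\cdot\,}\right)$ through $q=x^2+dy^2$ to recover the $x\equiv\pm1,\ y\equiv 0\pmod 8$ behaviour for $G_p$ and exhibit its failure for $M_p$, or by an explicit example, is where the real work lies, since the bare equivalence of representability by $F_d$ and $F_{2d}$ does not on its own separate the two families.
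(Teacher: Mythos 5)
Your proof is correct and follows essentially the same route as the paper: the paper's own proof simply defers to Proposition \ref{drem3} with the single change that both orders $\Z[\sqrt{-d}]$ and $\Z[\sqrt{-2d}]$ are now maximal (conductor $1$), so both representation conditions reduce to complete splitting in the common Hilbert class field $L$ --- exactly the conductor computation you carry out. Your closing discussion of the clause ``which is not the case for usual Mersenne primes'' goes beyond the paper, which offers no argument for that remark, and you are right to flag that the bare equivalence does not by itself substantiate it.
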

\begin{proof}
 Proof follows from the previous proposition, except for the fact,  $[\mathcal{O}_{K}:\Z[\sqrt{-d}]]=2$,
 which is $1$ in the present case, as $d\equiv 1\bmod{4}$.
\end{proof}
 \subsection{Gaussian Mersenne primes as $x^2+7y^2$}
 \label{impth}
  Here we give first few examples of Gaussian Mersenne primes and their representation as $x^2+7y^2$.
 \[G_{7}=113=1+7\cdot4^{2}\] 
\[G_{47}=140737471578113= 5732351^{2}+7\cdot3925696^{2}\] 
\[G_{73}= 9444732965601851473921=96890022433^{2}+7\cdot2854983576^{2} \]
$G_{113}=10384593717069655112945804582584321=$ \[79288509938147361^{2}+7\cdot24195412519312600^{2} \]
One can check that, Gaussian Mersenne prime $G_p$ is $1$  $\text{ mod }{8}$  for all $p>3$.
Also ,  \[\text { if }p\equiv 1\pmod{6} \text{ then } Gp\equiv1\pmod{7} \] and 
\[\text{ if }p\equiv 5\pmod{6} \text{  then }Gp\equiv4\pmod{7} \text{ for all }p.\]
For $p>7$, a  simple computation  shows that, in each case $x\equiv \pm 1 \bmod{8}$ and $y\equiv 0\bmod{8}$.  
Also, from the above table it is not difficult to see  that $y$ is exactly  divisible by $4$.
 These two observations are proved as a lemma below.
 \begin{lemma}\label{mainlemma}
  If  $G_{p}$ is represented in the form  $x^2+7y^2$ then  $x\equiv \pm1\pmod{8}$ and $4|y$.
 \end{lemma}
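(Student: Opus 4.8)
The plan is to reduce everything to a congruence analysis of the equation $G_p = x^2 + 7y^2$ modulo a suitable power of $2$, exploiting the explicit shape of the Gaussian Mersenne norm. The divisibility $4 \mid y$ will already fall out modulo $8$, whereas pinning $x$ down to $\pm 1 \pmod 8$ requires refining the computation modulo $16$; the whole argument is elementary once the right modulus is chosen.

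First I would record the two congruences for $G_p$ that drive the argument. From $G_p = 2^p - \left(\frac{2}{p}\right) 2^{(p+1)/2} + 1$ one has, for $p \geq 7$, both $2^p \equiv 0$ and $2^{(p+1)/2} \equiv 0 \pmod{16}$ (since $p \geq 7$ forces $(p+1)/2 \geq 4$), so that $G_p \equiv 1 \pmod{16}$, and in particular $G_p \equiv 1 \pmod 8$. The finitely many smaller primes actually represented by $F_7$ can be checked by hand, for instance $G_7 = 113 = 1^2 + 7 \cdot 4^2$.

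Next I would extract $4 \mid y$ working modulo $8$. Since the squares modulo $8$ lie in $\{0,1,4\}$ and $7 \equiv -1 \pmod 8$, the congruence $x^2 + 7y^2 \equiv 1 \pmod 8$ becomes $x^2 - y^2 \equiv 1 \pmod 8$; running through the possibilities for the pair $(x^2, y^2) \bmod 8$ shows the only solution is $x^2 \equiv 1$ together with $y^2 \equiv 0 \pmod 8$. The latter is equivalent to $4 \mid y$ (a number whose square is divisible by $8$ must itself be divisible by $4$), and the former only says that $x$ is odd, which is why a finer modulus is needed.

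Finally I would upgrade ``$x$ odd'' to $x \equiv \pm 1 \pmod 8$ by passing to the modulus $16$. Writing $y = 4z$ gives $7y^2 = 112 z^2 \equiv 0 \pmod{16}$, whence $x^2 \equiv G_p \equiv 1 \pmod{16}$. Among the residues $\{0,1,4,9\}$ realised as squares modulo $16$, the value $1$ is attained exactly when $x \equiv \pm 1 \pmod 8$, which is the desired conclusion. The only point demanding care is the mod-$16$ evaluation of $G_p$, namely ensuring $p$ is large enough that both powers of $2$ vanish; once that is in place, the rest is a finite verification on the table of squares, and no genuine obstacle remains.
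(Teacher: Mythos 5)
Your proposal is correct and follows essentially the same route as the paper: establish $G_p\equiv 1\pmod{16}$ from the explicit formula, deduce $x$ odd and $4\mid y$ by a residue check modulo $8$, then refine to $x\equiv\pm1\pmod 8$ by working modulo $16$. The only cosmetic difference is that you rewrite $7\equiv-1\pmod 8$ to phrase the mod-$8$ step as $x^2-y^2\equiv 1$, which slightly streamlines the case analysis.
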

 \begin{proof}
 From the above discussion it is clear that, $G_p$ can be written in the form $x^2+7y^2$ if and only if 
   $p\equiv\pm 1\pmod{8}$. Also, 
 $G_{p}=2^{p}-2^{\frac{p+1}{2}}+1\equiv 1\pmod{8}$ for all $p>3$. 
  We first show that, $y$ is divisible by $4$. For this we show that, $x$ is an odd integer and $y$ is an even integer. For if $x$ is even and $y$ is odd, 
  then $x^{2}\equiv 0 \text{ or } 4\pmod{8}$
  and $y^{2}\equiv 1\pmod{8}$, which implies $x^{2}+7y^{2}\equiv\ 7\text{ or }3\pmod{8}$, a contradiction.
  Hence $x$ should be an odd integer and $y$ should be an even integer.
  Now, clearly $x^{2}\equiv 1\pmod{8}$ and $y^{2}\equiv 0 \text{ or } 4\pmod{8}$.
  If $y^{2}\equiv 4\pmod{8}$ then $x^{2}+7y^{2}\equiv 5\pmod{8}$, again a contradiction.
  Hence $y\equiv 0\pmod{4}$ and we have $4|y$. Now we show that $x\equiv\pm 1\pmod{8}$.
  Clearly $x^2+7y^2\equiv x^2\pmod{16}$. Since $x$ is an odd integer either $x^2\equiv 1\pmod{16}$ or $x^2\equiv 9\pmod{16}$. 
  If $x^2\equiv 9\pmod{16}$ then $G_{p}\equiv 9\pmod{16}$, a contradiction as $G_{p}\equiv1\pmod{16}$, for all $p\equiv\pm1\pmod{8}$.
  Thus, $x^{2}\equiv1\pmod{16}$. Hence finally $x\equiv \pm 1\pmod{8}$ and $y\equiv0\pmod{4}$.
 \end{proof}
 Now it remains to prove that $y$ is divisible by $8$ in the expression $G_p=x^2+7y^2$.
 As a first step we transform the base 
 field $\Q(\sqrt{-7})$ into $\Q(\sqrt{2})$.
 Then show that $Gp$ splits completely in the cyclic quartic unramified extension of $\Q(\sqrt{-14})$.
 This is because,  $G_{p}\equiv 1\pmod{8}$ for all $p>3$ which shows that, $G_p$ is a prime in $\Q$ which splits in $\Q(\sqrt{2})$.
\begin{proposition} \label{mainprop}
  Let $p>7$ and $G_p=2^{p}-(\frac{2}{p})2^{\frac{p+1}{2}}+1$ be a Gaussian Mersenne prime. Then for $p\equiv\pm 1\pmod{8}$
 the form $G_p= x^2+7y^2$ exists. Suppose that there exists a cyclic extension $H_4$ of
$S=\Q(\sqrt{-14})$ with $[H_4:S]=4$, $H_4\subset H(S)$ and
$\sqrt{2}\in H_4$. Then $G_p$ splits completely in $H_4$.
 \end{proposition}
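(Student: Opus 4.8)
The plan is to treat the two assertions of the statement separately, and to convert the splitting claim into a question about which binary form of discriminant $-56$ represents $G_p$. For the first assertion, observe that $p\equiv\pm1\pmod 8$ is exactly the condition $\left(\tfrac{2}{p}\right)=1$, for which $G_p=2^{p}-2^{(p+1)/2}+1$. Reducing modulo $7$ and using $2^{3}\equiv1\pmod 7$ gives $G_p\equiv1\pmod 7$ when $p\equiv1\pmod 6$ and $G_p\equiv4\pmod 7$ when $p\equiv5\pmod 6$, so in either case $\left(\tfrac{G_p}{7}\right)=1$; since $-7\equiv1\pmod 4$, quadratic reciprocity gives $\left(\tfrac{-7}{G_p}\right)=1$ and hence $G_p$ splits in $\Q(\sqrt{-7})$. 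Because the order $\Z[\sqrt{-7}]$ has conductor $2$ and ring class number $1$, its ring class field is $\Q(\sqrt{-7})$ itself, so (as $G_p\nmid 28$) splitting is equivalent to representability by the principal form. This recovers the existence of $G_p=x^{2}+7y^{2}$ already recorded in Lemma~\ref{mainlemma}.

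For the second assertion I first identify $H_4$. The field $S=\Q(\sqrt{-14})$ has $h(S)=4$ with cyclic class group, so its Hilbert class field $H(S)$ is the unramified cyclic quartic extension of $S$; since $[H_4:S]=4$ and $H_4\subseteq H(S)$ we get $H_4=H(S)$, and the requirement $\sqrt 2\in H_4$ is automatic because the genus field $\Q(\sqrt2,\sqrt{-7})$ is the unique quadratic subextension of $H(S)/S$. As $\Z[\sqrt{-14}]$ is the maximal order $\mathcal{O}_S$, a rational prime unramified in $H(S)$ splits completely there exactly when the prime $\mathfrak p\mid G_p$ of $S$ is principal, i.e.\ exactly when $G_p$ is represented by the principal form $F_{14}=x^{2}+14y^{2}$. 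Thus the whole task reduces to showing that $G_p$, for $p>7$, is represented by $F_{14}$.

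Next I would locate the Frobenius of $G_p$ inside $\mathrm{Gal}(H_4/\Q)\cong D_4$. From $G_p\equiv1\pmod 8$ the prime $G_p$ splits completely in $\Q(\sqrt2)$, and from $G_p=x^{2}+7y^{2}$ it splits in $\Q(\sqrt{-7})$; hence it splits completely in the genus field $\Q(\sqrt2,\sqrt{-7})$, which is the maximal subextension of $H_4$ abelian over $\Q$. Equivalently the class of $\mathfrak p$ in $\mathrm{Cl}(S)$ lies in the principal genus $\mathrm{Cl}(S)^{2}=\{[\mathcal{O}_S],\,\mathfrak q\}$, where $\mathfrak q$ is the unique class of order $2$, represented by $2x^{2}+7y^{2}$. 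It therefore only remains to decide between these two classes, i.e.\ to rule out $G_p=2x^{2}+7y^{2}$ and conclude $[\mathfrak p]=[\mathcal{O}_S]$.

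This last step is the crux, and it is precisely where the hypothesis $p>7$ enters: for $p=7$ one has $G_7=113=2\cdot5^{2}+7\cdot3^{2}$, which sits in the order-$2$ class and does \emph{not} split completely in $H_4$. The separating datum is the behaviour at $2$, namely that $G_p\equiv1\pmod{32}$ for every prime $p>7$ (since then $(p+1)/2\ge 5$), whereas $G_7\equiv17\pmod{32}$. To finish I would exploit that $H_4/\Q(\sqrt2)$ is an abelian (Klein four) extension: $G_p$ splits in $\Q(\sqrt2)$ as $\mathfrak q\bar{\mathfrak q}$ with $\mathfrak q$ of degree one, and "splits completely in $H_4$'' becomes the vanishing of the Artin symbol of $\mathfrak q$ in $\mathrm{Gal}(H_4/\Q(\sqrt2))$. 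Splitting in the genus field already forces this symbol into the order-$2$ subgroup $\langle r^{2}\rangle=\mathrm{Gal}(H_4/\Q(\sqrt2,\sqrt{-7}))$, so the decision reduces to a single quadratic (Artin) condition on $\mathfrak q$ modulo the conductor of $H_4/\Q(\sqrt2)$, which divides a power of $(\sqrt2)$ together with the primes above $7$. The main obstacle is to make this congruence-to-class-field dictionary explicit and to verify that $G_p\equiv1\pmod{32}$ (the feature distinguishing $p>7$ from $p=7$) forces that symbol to be trivial, placing $\mathfrak p$ in the principal rather than the order-$2$ class; this is the analogue, in the present setting, of the $F_d\leftrightarrow F_{2d}$ correspondence of Proposition~\ref{drem3}. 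Everything preceding this verification is routine.
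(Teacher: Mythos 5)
Your reduction of the splitting claim to ``$G_p$ is represented by $x^2+14y^2$'' is a legitimate and genuinely different framing from the paper's: the paper never passes to the form class group of discriminant $-56$, but instead factors $G_p=v_p\bar v_p$ in $\Z[\sqrt2]$ and computes the Artin symbols of $v_p$ in the two conjugate quadratic extensions $J_1=J(\sqrt{-1+2\sqrt2})$, $J_2=J(\sqrt{-1-2\sqrt2})$ of $J=\Q(\sqrt2)$ inside $H_4$, using $\mathrm{Gal}(H_4/J)\cong\mathrm{Gal}(J_1/J)\times\mathrm{Gal}(J_2/J)$ and the coprime factorization $\Delta(J_1/J)\Delta(J_2/J)=(-7)$. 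Your identification of $H_4$ with $H(S)$, the genus-theory step placing the class of $\mathfrak p$ in $\{[\mathcal{O}_S],[2x^2+7y^2]\}$, and the observation that $G_7=113$ lands in the wrong class are all correct and clarifying. But the proposal stops exactly at the statement being proved: ruling out the class of $2x^2+7y^2$ \emph{is} the content of the proposition (everything before it follows from $G_p\equiv1\pmod 8$ and $G_p\equiv 1,4\pmod 7$, which hold for $p=7$ as well), and you explicitly defer that verification. So as written this is a setup for a proof, not a proof.

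Moreover, the route you indicate for closing the gap points in the wrong direction. Since $H_4/S$ is unramified and $2$ is already ramified in $J/\Q$, the extension $H_4/J$ is unramified above $2$; its conductor is supported only at the two primes of $\Z[\sqrt2]$ above $7$ (and the real places). Consequently the Artin symbol of $\mathfrak q=(v_p)$ in $\mathrm{Gal}(H_4/J)$ is a condition on $v_p$ modulo the primes above $7$ together with sign conditions, and no $2$-adic congruence on $G_p$ can decide it; indeed $x^2+14y^2$ and $2x^2+7y^2$ lie in the same genus, so they represent the same congruence classes and cannot be separated by congruences on $G_p$ alone, mod $32$ or otherwise. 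The datum that actually does the work (and that the paper uses, however tersely) is the residue of the individual factor $v_p$ modulo the two primes dividing $7$ in $\Z[\sqrt2]$ --- obtained from $G_p\equiv 1$ or $4\pmod 7$, the quadratic-reciprocity computation $\left(\frac{-7}{G_p}\right)=1$, and the claim $v_p\equiv\bar v_p\equiv1\pmod{\Delta(J_i/J)}$ --- not the congruence $G_p\equiv1\pmod{32}$. To complete your argument you would need to produce an explicit $v_p$ (or an explicit analysis of $\mu_p=(1\pm i)^p-1$) and compute its class modulo the primes above $7$, showing it is a square there for $p>7$; that computation is absent.
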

\begin{proof}
 Let $K=\Q(\sqrt{-7})$ and let $J=\Q(\sqrt{2})$. As $\sqrt{2}\in H_4$, consider an extension $J=\Q(\sqrt{2})\subset H_4$
 and $Gal(H_4/\Q)$ is isomorphic to the dihedral group with $8$
 elements. Thus, there are two conjugate field extensions  
 of $J$, say $J_1$ and $J_2$ contained in $H_4$, namely $J_1=J(\sqrt{-1+2\sqrt{2}})$ and 
 $J_2=J(\sqrt{-1-2\sqrt{2}})$. Consider the following field diagram.
 \begin{center}
 \begin{tikzpicture}
  \node (h4) at (0,2) {$H_4$};
   \node (jk) at (0,1) {$JK$};
  \node (j1) at (1,1) {$J_1$};
  \node (j2) at (2,1) {$J_2$};
  \node (k) at (-1,0) {$K$};
  \node (s) at (0,0) {$S$};
   \node (j) at (1,0) {$J$};
  \node (q) at (0,-1) {$\Q$};
  
  \draw (q) -- (k) -- (jk) -- (s) -- (q) -- (j) -- (jk)-- (h4) -- (j1) -- (j)-- (j2) -- (h4);
   
  \draw[preaction={draw=white, -,line width=6pt}] ;
\end{tikzpicture}
\end{center}  The discriminant of $JK$ over $J$ is $-7$. The discriminants $\Delta(J_1/J)$
  and $\Delta(J_2/J)$ must be relatively prime. If not there is a prime of $J$ which is ramified in $J_1$ and
  $J_2$. This prime would be ramified in $JK$, because $\Delta(H_4/JK)=1$. But then the inertia field of this prime ideals
  equals $J$, which cannot be the case  since $\Delta(H_4/JK)=1$, using the result on discriminant of tower of fields,
  and  $-7=\Delta(J_1/J)\Delta(J_2/J)$. 
Since $G_p$ splits in $\Q(\sqrt{2})$,  write $G_p=v_{p}\cdot \bar {v_p}$, identifying $v_{p}=x_1+\sqrt{2}y_1$ and $\bar {v_p}=x_1-\sqrt{2}y_1\in \Z_{J}$
for some $x_1,y_1\in \Z$.
Clearly  $\sigma(v_p)>0$ and $\sigma(\bar {v_p})>0$ for all $p$ and for all $\sigma$ in $Gal(J/\Q)$.
\[\text{ Claim: } v_p\equiv\bar v_p\equiv 1\pmod{\Delta(J_i/J)}\text { for } i=1,2  \text{ and for all } p\]
Here we consider two cases $p\equiv 1\pmod{6}$ and $p\equiv 5\pmod{6}$.
For $p\equiv 1\pmod{6}$, we have $G_p\equiv 1\pmod{7}$ hence, $v_p\equiv\bar v_p\equiv 1\pmod{\Delta(J_1/J)}$  { and }
$v_p\equiv\bar v_p\equiv 1\pmod{\Delta(J_2/J)} $. 
Since  $\Delta(J_1/J)$ and $\Delta(J_2/J)$ are coprime, we get $v_p\equiv\bar {v_p}\equiv 1\pmod{\Delta(J_i/J) }$ for $i\in {1,2}$.
Applying Artin map,
\[\left(\frac{J_i/J}{v_{p}}\right)=\left(\frac{J_i/J}{\bar {v_{p}}}\right)=1 \text{ for } i\in {1,2},\]
 since $v_p$ and $\bar v_p$ are in the kernel.
The Galois group of $H_4/J$ is isomorphic to $Gal(J_1/J)\times Gal(J_2/J)$. So
\[\left(\frac{H_4/J}{v_{p}}\right)=\left(\frac{H_4/J}{\bar {v_{p}}}\right)=1.\]
So, $G_p$ splits completely in $H_4$ for $p\equiv 1\pmod{6}$. 
For $p\equiv 5\pmod{6}$ to show that Artin symbol corresponding to $J_1/J$ and $J_2/J$ is trivial, 
the strategy has to be some what different.
We know that, for $p\equiv 5\pmod{6}$, $v_p\cdot\bar v_p\equiv 1\pmod{8}$, which is a prime in $\Q$ splits in $J=\Q(\sqrt{2})$,
and therefore $J$ completed at $v_p$ is nothing but $\Q$ completed at $G_p=v_p\cdot \bar v_p$.
Since $G_p\equiv 1\pmod{8}$,  we have, \[\left(\frac{-1}{G_p}\right)=1\] Now it remains to compute the Legendre symbol $\left(\frac{-7}{G_p}\right).$
If $p\equiv 5\pmod{6}$ then $G_p\equiv 4\pmod{7}$, hence $\left(\frac{G_p}{7}\right)=1$. Therefore by quadratic reciprocity we have,

\[\left(\frac{-7}{G_p}\right)=1,\]
and the claim follows.
 \end{proof}
 From Proposition~\ref{mainprop} and Lemma~\ref{mainlemma} it is clear
that  $G_{p}$ splits completely in the cyclic extension of
$\Q(\sqrt{-14})$. 
Now we are ready to prove our main result that, for  $p>7$ and  $p\equiv\pm 1\pmod{8}$ in the representation of $G_p$ as
$G_p=x^2+7y^2$ for some $x,y\in \Z$,  $y$ is divisible by $8$.
\begin{theorem} \label{mainth}
 Let $p>7$ and $G_p=2^{p}-(\frac{2}{p})2^{\frac{p+1}{2}}+1$ be a Gaussian Mersenne prime. Then for $p\equiv\pm 1\pmod{8}$
 the form $G_p=x^2+7y^2$ exists for some $x,y\in \Z$ and $y$ is divisible by $8$.
\end{theorem}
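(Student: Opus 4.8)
The plan is to recast the divisibility $8\mid y$ as a single splitting condition in a ring class field of $K=\Q(\sqrt{-7})$, and then to produce that splitting by combining Proposition~\ref{mainprop} with the elementary congruence $G_p\equiv1\pmod{16}$.

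First I would fix the arithmetic datum. By Lemma~\ref{mainlemma} the integer $x$ is odd and $y$ is even, and since $K$ has class number one the prime of $K$ above $G_p$ is $\mathfrak{P}=\pi\mathcal{O}_K$ with $\pi=x+y\sqrt{-7}$. Writing $\sqrt{-7}=2\omega-1$ with $\omega=\tfrac{1+\sqrt{-7}}{2}$ gives $\pi=(x-y)+2y\,\omega$, so $\pi$ is congruent to a rational integer (necessarily odd) modulo $16\mathcal{O}_K$ if and only if $8\mid y$. Hence, letting $\Omega$ be the ring class field of the order of conductor $16$ (discriminant $-1792$) in $K$, Definition~\ref{isodef} yields
\[
8\mid y\iff \mathfrak{P}\in P_{K,\mathbb{Z}}(16\mathcal{O}_K)\iff G_p\ \text{splits completely in}\ \Omega .
\]
Thus the theorem is reduced to showing that $G_p$ splits completely in $\Omega$; note that $\Omega/K$ is abelian of degree $8$ and ramified only above the prime $2$.

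Next I would assemble two splitting inputs. By Proposition~\ref{mainprop} and the remark preceding the theorem, $G_p$ splits completely in $H_4=H(\Q(\sqrt{-14}))$. On the other hand, the explicit formula $G_p=2^{p}-2^{\frac{p+1}{2}}+1$ gives $G_p\equiv1\pmod{16}$ for all $p\equiv\pm1\pmod{8}$, and since also $\left(\frac{-7}{G_p}\right)=1$, the prime $G_p$ splits completely in $K(\zeta_{16})=\Q(\sqrt{-7},\zeta_{16})$.

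The crux is then the field inclusion $\Omega\subseteq H_4\cdot K(\zeta_{16})$. Granting it, $G_p$ splits completely in each of $H_4$ and $K(\zeta_{16})$, hence in their compositum, hence in the subfield $\Omega$, which by the first step is exactly $8\mid y$. To establish the inclusion one works entirely inside the lattice of abelian extensions of $K$: the three fields $\Omega$, $H_4$ and $K(\zeta_{16})$ all correspond to ideal groups of $K$, and the inclusion amounts to an inclusion of the associated norm (congruence) subgroups. As $\Omega$ is unramified outside $2$, this is a purely $2$-adic comparison at the prime $2$ (which splits in $K$), matching the degree-$8$ ring class group of conductor $16$ against the group cut out by $H_4$ together with the mod-$16$ cyclotomic condition.

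I expect this final $2$-adic class-field computation to be the main obstacle, and it is also the step that genuinely uses global input. The mere existence of the companion representation $G_p=a^2+14b^2$ furnished by splitting in $H_4$ is by itself insufficient: for a fixed prime the $2$-adic valuations attached to the forms $x^2+7y^2$ and $a^2+14b^2$ satisfy $v_2(x^2-1)=2v_2(y)$ and $v_2(a^2-1)=1+2v_2(b)$, and these remain mutually consistent with the identity $a^2-x^2=14b^2-7y^2$ whether or not $8\mid y$. Only after intersecting the $H_4$-condition with the cyclotomic condition $G_p\equiv1\pmod{16}$, as encoded by the inclusion $\Omega\subseteq H_4\cdot K(\zeta_{16})$, is $8\mid y$ forced.
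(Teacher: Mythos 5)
Your strategy is sound and reaches the theorem by a genuinely different packaging than the paper's, but as written it rests on one load-bearing claim that you name and then defer: the inclusion $\Omega\subseteq H_4\cdot K(\zeta_{16})$. Everything hinges on it, so the argument is incomplete until that comparison of congruence subgroups is actually carried out. The good news is that it is true and short. Since $2$ splits in $K$ as $2=\omega\bar\omega$ with $\omega=\frac{1+\sqrt{-7}}{2}$, one has $(\mathcal{O}_K/16\mathcal{O}_K)^*\cong(\Z/16\Z)^*\times(\Z/16\Z)^*$; write $(u,v)$ for the image of a generator $\alpha$ of an ideal prime to $2$. A trivial Artin symbol in $H_4/K$ forces $u\equiv\pm1\pmod{8}$ and $v\equiv\pm1\pmod{8}$ (this is exactly the conductor-$\omega^3$, resp. $\bar\omega^3$, computation the paper performs for $K_1/K$ and $K_2/K$), while a trivial Artin symbol in $K(\zeta_{16})/K$ forces $N\alpha=uv\equiv1\pmod{16}$. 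But the subgroup $\{\pm1,\pm7\}\subset(\Z/16\Z)^*$ consists of elements of order dividing $2$, so $uv\equiv1\pmod{16}$ together with $u,v\in\{\pm1,\pm7\}$ gives $u=v$, i.e. $(\alpha)\in P_{K,\Z}(16\mathcal{O}_K)$. That is precisely the required inclusion of norm subgroups, and the degrees ($[\Omega:K]=8$, $[H_4\cdot K(\zeta_{16}):K]=16$) are consistent. Your preliminary reduction of $8\mid y$ to $\mathfrak{P}\in P_{K,\Z}(16\mathcal{O}_K)$ via $\pi=(x-y)+2y\omega$ is correct.

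For comparison: both proofs consume exactly the same two inputs, namely the complete splitting of $G_p$ in $H_4$ (Proposition~\ref{mainprop}) and the congruence $G_p\equiv1\pmod{16}$. The paper feeds the first input through the single quadratic $K_1/K$ of conductor $\omega^3$ to obtain $x+y\sqrt{-7}\equiv\pm1\pmod{\omega^3}$, that is $x+3y\equiv\pm1\pmod{8}$ under $\sqrt{-7}\mapsto3$, and feeds the second input through the elementary Lemma~\ref{mainlemma} ($x\equiv\pm1\pmod{8}$ and $4\mid y$); the two congruences are then incompatible unless $8\mid y$. You promote both inputs to splitting conditions and the conclusion to complete splitting in a ring class field of conductor $16$, which is conceptually tidier and makes visible why the cyclotomic condition is indispensable (your closing remark about $a^2+14b^2$ alone being insufficient is a genuine and correct observation), at the price of the field-inclusion verification above. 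Once that verification is supplied, your proof is complete.
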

\begin{proof}
Here we make use of the fact that, $G_{p}$ splits completely in the cyclic extension of $\Q(\sqrt{-14})$, so
 we refer to Proposition~\ref{mainprop} for  rest of the proof. 
Consider the following lattice of fields.
\begin{center}
 \begin{tikzpicture}
  \node (h4) at (0,2) {$H_4$};
  \node (k1) at (-2,1) {$K_1$};
  \node (k2) at (-1,1) {$K_2$};
  \node (jk) at (0,1) {$JK$};
  \node (j1) at (1,1) {$J_1$};
  \node (j2) at (2,1) {$J_2$};
  \node (k) at (-1,0) {$K$};
  \node (s) at (0,0) {$S$};
   \node (j) at (1,0) {$J$};
  \node (q) at (0,-1) {$\Q$};
  
  \draw (q) -- (k) -- (k1) -- (h4) -- (k2) -- (k) -- (jk) -- (s) -- (q) -- (j) -- (jk)-- (h4) -- (j1) -- (j)-- (j2) -- (h4);
   
  \draw[preaction={draw=white, -,line width=6pt}] ;
\end{tikzpicture}
\end{center}
 It is clear that the Galois group of $H_4/\Q$ is the dihedral group
 with $8$ elements. Here $K=\Q(\sqrt{-7})$, 
$J=\Q(\sqrt{2})$, $S=\Q(\sqrt{-14})$ and
 $\omega=\frac{1+\sqrt{-7}}{2}$. Let $\bar \omega$  be the conjugate of
 $\omega$ then $\omega\bar\omega=2$ and
$2$ splits completely in $K$ as $2=(2,\omega)(2,\bar\omega)$.  Now, $K_1$
 and $K_2$ are the two  conjugate field extensions of $K$, and $J_1$ and $J_2$
 are the two conjugate field extensions of $J$ respectively. 
Discriminant of the extension fields are,
$\Delta(S/\Q)=-56$, $\Delta(K/\Q)=-7$, $\Delta(J/\Q)=8 $, $\Delta(JK/S)=1$, $\Delta(H_4/JK)=1$,
$\Delta(JK/J)=-7$  and  $\Delta(J_1/J)\Delta(J_2/J)=-7$. Clearly $\Delta(K_1/K)$ and $\Delta(K_2/K)$ are 
coprime, and  $\Delta(K_1/K) \cdot \Delta(K_2/K)=8$.
Since the discriminants are coprime and $K_1$ and $K_2$  are conjugates, $\Delta(K_1/K)=(2,\omega)^{3}$ and
$\Delta(K_2/K)=(2,\bar\omega)^{3}$. Now, $\Delta(H_4/\Q)=8^2$ and $\Delta(K_i/K)|8^2$.
Let $\Delta=\Delta(K_1/K)$. Then there is a
surjective group homomorphism:\[Cl_{\Delta}(K)\rightarrow
Gal(K_2/K).\] 
The group $I_{K}{(\Delta)}/P_{K,1}(\Delta)=P_{K}(\Delta)/P_{K,1}(\Delta)$ is a subgroup of $Cl_{\Delta}(K)$ and 
contains the principal prime ideal $\pi=(x+y\sqrt{-7})$ of $K$.
Any prime of $\Q$, which is inert in both $J$  and $K$ splits in $S$. For example, since $7\equiv1\pmod{3}$,
the prime $3$ of $\Q$ is inert in $J$ and $K$ and splits in $S$. Thus
the decomposition group of a prime $\mathfrak{p}_3$ in $H_4$ above $3$ is $S$. 
The prime $3\Z_{K}$ of $K$ does not split in $K_1$. Hence,
$P_{K}(\Delta)/P_{K,1}(\Delta)$ maps surjective on $Gal(K_1/K)$. 
The map from $(\Z_{K}/\Delta)^{*}$ to $P_{K}(\Delta)/P_{K,1}(\Delta)$
is surjective and  the group $(\Z_{K}/\Delta)^{*}$ is isomorphic to the group
$(\Z/8\Z)^{*}$. We have the following group homomorphism,
\[(\Z/8\Z)^{*}\simeq(\Z_{K}/\Delta)^{*}\rightarrow P_{K}(\Delta)/P_{K,1}(\Delta)\rightarrow Gal(K_1/K).\]
Now $\{\pm 1\}$ is contained in the kernel of the $(\Z_{K}/\Delta)^{*}\rightarrow P_{K}(\Delta)/P_{K,1}(\Delta)$ and this map is surjective hence,
the kernel of $(\Z_{K}/\Delta)^{*}\rightarrow P_{K}(\Delta)/P_{K,1}(\Delta)$ is $\{\pm 1\}$. Thus $P_{K}(\Delta)/P_{K,1}(\Delta)$ has two elements.
So,\[(\Z/8\Z)^{*}\simeq(\Z_{K}/\Delta)^{*}\rightarrow P_{K}(\Delta)/P_{K,1}(\Delta)\simeq Gal(K_1/K).\]
From Proposition~\ref{mainprop} it is clear that $G_{p}$ splits
completely in $H_4$ and $H_4$ contains $\sqrt{2}$. 
For any prime ideal $\pi$ of $K$, $\left(\frac{K_1/K}{\pi}\right)=1$. Hence, $\pi=x+y\sqrt{-7}$ is identity in $P_{K}(\Delta)/P_{K,1}(\Delta)$.
Thus,
\begin{equation}
  \label{maineq}x+y\sqrt{-7}\equiv\pm 1\bmod{\Delta}.
\end{equation}
From Lemma~\ref{mainlemma}, we have $4|y$. We assume that, $p>7$, so
$G_{p}\equiv 1\pmod{32}$. Since, $4|y$ and $x\equiv\pm 1\pmod{8}$, we have,
\[x^2+7y^2\equiv 1\pmod{16}\] and \[ (y\cdot\sqrt{-7})^{2}\equiv 0\pmod{16}.\]
That is $y\cdot\sqrt{-7}\equiv 0 \text{ or }4\pmod{8}$. 
 If $y\cdot\sqrt{-7}\equiv   4\bmod{8}$, then there is a contradiction to 
 $\pi\equiv x+y\sqrt{-7}\equiv\pm 1\bmod{\Delta}$ in \eqref{maineq}
 and to the splitting of $\pi$ in $H_4$.  Hence, \begin{equation}\label{eq1}
                                                  y\cdot\sqrt{-7}\equiv 0  \bmod{8}   
                                                 \end{equation}
Now, for    $K=\Q(\sqrt{-7})$, let $\mathcal{O}_K$ be its ring of integers. The element   $\omega=\frac{1+\sqrt{-7}}{2}$ is an
integer of $K$, which  is a zero of the polynomial $X^2-X+2$.
As $-2\in \Z/8\Z$ satisfies $X^2-X+2=0$, we obtain a  ring homomorphism  
\[ \mathcal{O}_{K}\rightarrow \Z/8\Z\]
\[a+b\omega\rightarrow (a-2b)\bmod{8}.\] 
Now $\sqrt{-7}=2\omega-1$ maps to $3$, confirming $y\equiv 0\pmod{8}$ from equation \ref{eq1}. This completes the proof.
\end{proof}
\subsection{Artin's reciprocity law }
  Now we give an alternate proof using the similar techinique used in \cite{henstev:2000:artinreci} to prove Theorem \ref{mainth}.   
\subsection{Primes in a Quadratic field} 
In this section we refer mainly to \cite{henstev:2000:artinreci}.
To prove the results in the previous section, we need to consider the field $\Q(\sqrt{-7})$.  
We know that, for   $K=\Q(\sqrt{-7})$,  we obtain a  ring homomorphism  
\[ \mathcal{O}_{K}\rightarrow \Z/8\Z\]
\[a+b\omega\rightarrow (a-2b)\bmod{8}.\] 
kernel $\mathfrak{a}$ of the map is generated by $8$ and $\omega+2$.
Thus $\mathfrak{a}=({\omega+2})\mathcal{O}_{K}$. Their kernels are prime ideals of index $2$ in $\mathcal{O}_{K}$ with generators
$\omega$ and $\bar \omega$. Also, the identity
$\omega\bar\omega=2$ and $\omega+2=-\omega^{3}$ show that the ideal 
$\mathfrak{a}$ factors as the cube of prime $\omega\mathcal{O}_{K}$.
Now consider an Abelian extension $L$ of $K$ with Galois group $G$, say 
$L=K[\beta]$ where $\beta$ is a zero of $x^2-\omega x-1$ and $L$ has dimension $2$ over $K$. $ \text{ with } \beta^{2}-\omega\beta-1=0.$
 Now consider \[K=\Q[\sqrt{-7}]=\Q[\omega] \text{ with } \omega^{2}-\omega+2=0 \text{  and } \]
  \[L=K[\beta] ,\qquad  \beta^2-\omega\beta  -1 \]
The discriminant of $L/K$ is $\omega^2+4=\omega+2$ is non-zero 
and $L$ has dimension $2$ over $K$, hence    $L$ is  Abelian
 over $K$ with  Galois group $G$ of order $2$, say $G=\{1,\rho\}$. 
 The non-identity element $\rho$ of $G$ satisfies $\rho(\beta)=\omega-\beta=-1/\beta$.
 The discriminant $\omega+2=-\omega^3$ of the polynomial defining $L$ is not a square in $K$, hence we have $L=K[\sqrt{-\omega}]$.
 
The inclusion map ${\mathcal{O}_{K}}\rightarrow{\mathcal{O}_{L}}$ induces a ring homomorphism 
$k(\mathfrak{p})\rightarrow {\mathcal{O}_{L}}/{\mathfrak{p}\mathcal{O}_{L}}$, where $k(\mathfrak{p})=\mathcal{O}_{K}/\mathfrak{p}$, for a 
prime $\mathfrak{p}$ of $K$.
\subsection{ An outline of the proof via Artin Symbol }If $\mathfrak{p}=\pi{\mathcal{O}}_{K}$ is a prime of  $K=\Q[\sqrt{-7}]=\Q[\omega]$ different from
$\omega{\mathcal{O}}_{K}$, then the Artin symbol
$\left(\frac{L/K}{\mathfrak{p}}\right)=1$ or $\rho$ according as $\pi$ maps to $\pm 1$ or to $\pm 3$ under the map 
${\mathcal{O}}_{K}\rightarrow \Z/8\Z$ that sends $\omega$ to $-2$.
For example, $\sqrt{-7}=2\omega-1$ maps to $3\bmod{8}$ and the number $(8k\pm1 )\pm8l\sqrt{-7}$ maps to
 $\pm 1\pmod{8}$ for $l,k\in \Z$. This is because, $\sqrt{-7}=2\omega-1$ and  $\omega$ maps to $-2$
under $ \Z/8\Z$.
We see that, for $p=47$,  $G_{47}=140737471578113= 5732351^{2}+7\cdot3925696^{2} $ is nothing but, 
\[5732351\pm 3925696\sqrt{-7}\equiv-1\pm 3(0)\equiv   -1\bmod{8}.\] 
Consider an element  of the form  $x+\sqrt{-7}y$ of norm $x^2+7y^2$ which is a rational prime.
The Artin symbol equals $1$ if $x+3y$ is $\pm 1\bmod{8}$, and $\rho$ otherwise. 
 From Lemma~\ref{mainlemma}, it is clear that $x$ leaves the remainder
 $\pm 1\bmod{8}$ and $y$ is divisible by $4$. 
 Hence the Artin symbol is $1$ if and only if $y$ is divisible by $8$. This observation is equivalent to the assertion that,
 \textit{any prime of $K=\Q(\sqrt{-7})$ of norm $G_{p}$ has trivial Artin symbol in the quadratic extension $L=K[\omega]$}.
 The proof works in the extension $N=K(\sqrt{-\omega},\sqrt{-\bar {\omega}})$, which is the union of two conjugate quadratic extensions
 $L=K(\sqrt{-\omega})$ and $L=K(\sqrt{-\bar {\omega}})$.  
Further details can be filled in by following  Theorem \ref{mainth} and the last theorem in  \cite{henstev:2000:artinreci} and \cite{pal:2012:jis}.
 
  \section{Generalization of   Theorem~\ref{mainth}}
 In this section we generalize the result of Theorem \ref{mainth}  for $K=\Q(\sqrt{-d})$ that has the same properties as  $K=\Q(\sqrt{-7})$.
 In this case there exists a  cyclic extension  $H_4$ of $S=\Q(\sqrt{-2\cdot d})$ and $\sqrt{2}\in H_4$, 
 so that $G_p$ splits completely in $H_4$. For this one requires $\left(\frac{2}{d}\right)=1$ and
 $\left(\frac{-d}{G_p}\right)=1$. Thus, with 
 these conditions 
we state the general form of  Proposition~\ref{mainprop}.
\begin{proposition}
Let  $d\equiv 3\pmod{4}$ be a square free integer and $d>0$.
Let $G_p$ be a Gaussian Mersenne prime  and   $G_p=x^{2}+dy^{2}$ for $d\equiv 3\pmod{4}$ 
exists and $\left(\frac{2}{d}\right)=1=\left(\frac{-d}{G_p}\right) $. Suppose that there exists a  cyclic extension $H_4$ of
$S=\Q(\sqrt{-2\cdot d})$ with $[H_4:S]=4$, $H_4\subset H(S)$ and
$\sqrt{2}\in H_4$. Then $G_p$ splits completely in $H_4$.
\end{proposition}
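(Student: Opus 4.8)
The plan is to run the proof of Proposition~\ref{mainprop} essentially verbatim with $7$ replaced by $d$ throughout, using the two Legendre-symbol hypotheses to supply exactly the two facts that were computed by hand in the case $d=7$. First I would set $K=\Q(\sqrt{-d})$, $J=\Q(\sqrt{2})$ and $S=\Q(\sqrt{-2d})$, and recall that $G_p\equiv 1\pmod 8$ for $p>3$, so that $G_p$ splits completely in $J$. Since $[H_4:S]=4$ is cyclic and $[S:\Q]=2$, the group $Gal(H_4/\Q)$ is dihedral of order $8$, so inside $H_4$ there are two conjugate quadratic extensions $J_1,J_2$ of $J$, with $JK$ the third quadratic subextension of the Klein four-group $Gal(H_4/J)\cong Gal(J_1/J)\times Gal(J_2/J)$. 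The hypothesis $\left(\frac{2}{d}\right)=1$ is what guarantees that the primes dividing $d$ split in $J$, so that the relative discriminants $\Delta(J_1/J)$ and $\Delta(J_2/J)$ are ideals of $\Z_J$ supported on the primes of $J$ above $d$.

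Next I would reprove the coprimality of $\Delta(J_1/J)$ and $\Delta(J_2/J)$ exactly as in Proposition~\ref{mainprop}: since $H_4\subset H(S)$ the extension $H_4/S$ is unramified, hence so is $H_4/JK$, giving $\Delta(H_4/JK)=1$; if some prime of $J$ ramified in both $J_1$ and $J_2$, its inertia group in $Gal(H_4/J)\cong(\Z/2\Z)^2$ would be the whole group and would force ramification in $H_4/JK$, a contradiction. The conductor--discriminant formula together with the tower relation $\Delta(H_4/J)=\Delta(JK/J)^2$ then yields $\Delta(J_1/J)\Delta(J_2/J)=\Delta(JK/J)=-d$.

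Since $G_p$ splits in $J$, I would write $G_p=v_p\cdot\bar v_p$ with $v_p=x_1+\sqrt{2}\,y_1$ and $\bar v_p=x_1-\sqrt{2}\,y_1$ in $\Z_J$, chosen totally positive, and aim at the key congruence $v_p\equiv\bar v_p\equiv 1\pmod{\Delta(J_i/J)}$ for $i=1,2$. As in the $d=7$ argument this splits into two cases according to the residue of $G_p$ modulo $d$. When $G_p\equiv 1\pmod d$ the congruence follows directly from $v_p\bar v_p\equiv 1$ modulo each prime above $d$ together with the coprimality just established. In the remaining case the congruence $G_p\equiv 1\pmod d$ is unavailable, and here one uses instead that $G_p\equiv 1\pmod 8$ gives $\left(\frac{-1}{G_p}\right)=1$, while the hypothesis $\left(\frac{-d}{G_p}\right)=1$ combined with quadratic reciprocity supplies the triviality that was obtained for $d=7$ by the explicit computation $\left(\frac{-7}{G_p}\right)=1$. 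Once the key congruence holds, $v_p$ and $\bar v_p$ lie in the kernel of the Artin map for each $J_i/J$, so $\left(\frac{J_i/J}{v_p}\right)=\left(\frac{J_i/J}{\bar v_p}\right)=1$; and because $Gal(H_4/J)\cong Gal(J_1/J)\times Gal(J_2/J)$, the Artin symbol $\left(\frac{H_4/J}{v_p}\right)$ is trivial, i.e.\ $G_p$ splits completely in $H_4$.

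The main obstacle I expect is the harder of the two cases in the key congruence: extracting $v_p\equiv\bar v_p\equiv 1\pmod{\Delta(J_i/J)}$ from the Legendre-symbol hypotheses alone, without the direct congruence $G_p\equiv 1\pmod d$. The delicate point is to translate $\left(\frac{2}{d}\right)=1=\left(\frac{-d}{G_p}\right)$ into triviality of the Artin symbols at the primes of $J$ above $d$, and to check that the argument still goes through when $d$ is composite (square-free) rather than prime, since then $\left(\frac{2}{d}\right)=1$ is only a Jacobi symbol and need not force every prime divisor of $d$ to split in $J$. Confirming that the dihedral structure of $Gal(H_4/\Q)$ and the conjugacy of $J_1$ and $J_2$ genuinely persist under these general hypotheses is the other place where care will be needed.
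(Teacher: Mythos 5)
Your proposal follows exactly the route the paper intends: the paper's own ``proof'' of this proposition is a single sentence deferring to the argument of Proposition~\ref{mainprop}, and your plan is precisely to rerun that argument with $7$ replaced by $d$, using the two Legendre-symbol hypotheses in place of the explicit computations for $d=7$. In fact your write-up is more careful than the paper's, since you flag the genuine subtleties (composite square-free $d$, the Jacobi-symbol issue, and the persistence of the dihedral structure) that the paper leaves entirely unaddressed.
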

\begin{proof}
 Proof of this proposition is similar to  Proposition~\ref{mainprop}.
\end{proof}
Thus, the generalized form of    Theorem \ref{mainth} is stated below, proof of this theorem is similar to the proof of Theorem ~\ref{mainth}. 
\begin{theorem}
Let $d\equiv 7\pmod{24}$  be a square free integer and $d>0$.
 Suppose   $G_{p}$ is a Gaussian Mersenne prime which splits completely in a  cyclic extension $H_4$  of
$S=\Q(\sqrt{-2\cdot d})$. If $G_p=x^{2}+dy^{2}$ for $d\equiv 7\pmod{24}$  exists then 
 $8|y$.
\end{theorem}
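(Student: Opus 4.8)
The plan is to run the proof of Theorem~\ref{mainth} with $7$ replaced by $d$ throughout, exploiting that the hypothesis $d\equiv 7\pmod{24}$ encodes exactly the two arithmetic facts that made the case $d=7$ go through: $d\equiv 7\pmod 8$ (equivalently $-d\equiv 1\pmod 8$) and $d\equiv 1\pmod 3$. I set $K=\Q(\sqrt{-d})$, $J=\Q(\sqrt 2)$, $S=\Q(\sqrt{-2d})$ and $\omega=\frac{1+\sqrt{-d}}{2}$, a root of $X^2-X+c$ with $c=\frac{1+d}{4}$. Since $d\equiv 7\pmod 8$ the integer $c$ is even and $X^2-X+c\equiv X(X-1)\pmod 2$, so $2$ splits in $K$ as $\mathfrak{p}_2\bar{\mathfrak{p}}_2$ with $\mathfrak{p}_2=(2,\omega)$, $\bar{\mathfrak{p}}_2=(2,\bar\omega)$ and $\bar\omega=1-\omega$; this prime $\mathfrak{p}_2$ plays the role that $\omega\mathcal{O}_K$ (of norm $\omega\bar\omega=2$) played for $d=7$. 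The dihedral lattice of fields, the conjugate quadratic extensions $K_1,K_2$ of $K$ and $J_1,J_2$ of $J$ inside $H_4$, and the complete splitting of $G_p$ in $H_4$ (which here is a standing hypothesis) are transcribed unchanged.

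Next I establish the elementary congruence input, the analogue of Lemma~\ref{mainlemma}, which holds for every $d\equiv 7\pmod 8$: from $G_p\equiv 1\pmod 8$ a parity analysis of $x^2+dy^2\pmod 8$ forces $x$ odd and $y$ even; then $x^2\equiv 1$ and $dy^2\equiv 7y^2\pmod 8$, and ruling out $y^2\equiv 4\pmod 8$ gives $4\mid y$; working modulo $16$, where $dy^2\equiv 0$, forces $x^2\equiv 1\pmod{16}$, i.e. $x\equiv\pm 1\pmod 8$. For $p>7$ one has $G_p\equiv 1\pmod{32}$ because both $2^{p}$ and $2^{(p+1)/2}$ vanish modulo $32$. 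The conditions $\left(\frac{2}{d}\right)=1$ and $\left(\frac{-d}{G_p}\right)=1$ that place $G_p$ in $H_4$ follow from $d\equiv -1\pmod 8$ and $d\equiv 1\pmod 3$ respectively, exactly as in the Legendre-symbol computation of Proposition~\ref{mainprop}; here they are absorbed into the assumption that $G_p$ splits completely in $H_4$.

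The class field theory core is copied from Theorem~\ref{mainth}. Because $H_4/S$ is unramified, the only ramification of $K_1,K_2$ over $K$ lies above $2$, so $\Delta(K_1/K)$ and $\Delta(K_2/K)$ are coprime powers of $\mathfrak{p}_2,\bar{\mathfrak{p}}_2$, and wild quadratic ramification at the prime above $2$ yields $\Delta(K_1/K)=\mathfrak{p}_2^{3}$. Since $\mathfrak{p}_2$ has residue degree $1$ we get $\mathcal{O}_K/\mathfrak{p}_2^{3}\cong\Z/8\Z$, whence $(\Z_K/\Delta)^{*}\cong(\Z/8\Z)^{*}$ and the Artin map factors as $(\Z/8\Z)^{*}\cong(\Z_K/\Delta)^{*}\to P_{K}(\Delta)/P_{K,1}(\Delta)\to Gal(K_1/K)$ with kernel $\{\pm 1\}$. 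Complete splitting of $G_p$ in $H_4\supset K_1$ makes the Artin symbol of any prime $\pi=x+y\sqrt{-d}$ of norm $G_p$ trivial, so $\pi\equiv\pm 1\pmod{\Delta}$; feeding in $4\mid y$, $x\equiv\pm 1\pmod 8$ and $G_p\equiv 1\pmod{32}$ and running the $\pmod 8$/$\pmod{16}$ argument of Theorem~\ref{mainth} verbatim forces $y\sqrt{-d}\equiv 0\pmod 8$ rather than $\equiv 4\pmod 8$.

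Finally I descend from $y\sqrt{-d}\equiv 0\pmod 8$ to $8\mid y$ via a ring homomorphism $\mathcal{O}_K\to\Z/8\Z$, $a+b\omega\mapsto a+bt$, where $t\in\Z/8\Z$ solves $t^2-t+c\equiv 0\pmod 8$. Such a $t$ exists: completing the square, the congruence is equivalent to $(2t-1)^2\equiv 1-4c=-d\pmod{32}$, and since $d\equiv 7\pmod 8$ we have $-d\equiv 1\pmod 8$, and every residue $\equiv 1\pmod 8$ is an odd square modulo $32$. Under this map $\sqrt{-d}=2\omega-1\mapsto 2t-1$, an odd unit of $\Z/8\Z$, so $y(2t-1)\equiv 0\pmod 8$ gives $8\mid y$. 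I expect the one genuinely new step to be the discriminant computation $\Delta(K_1/K)=\mathfrak{p}_2^{3}$: everything downstream depends on the modulus being $8$ rather than $4$ (with exponent $2$ one would only recover $4\mid y$), so one must check that the ramification above $2$ in $K_1/K$ is wild of exponent $3$ for all $d\equiv 7\pmod{24}$ and not merely for $d=7$, where it followed from $\omega$ being a uniformizer at $\mathfrak{p}_2$. The remaining numerics, namely $-d$ being a square modulo $32$, the existence of $t$, and the parity analysis, are uniform in $d$ and present no difficulty.
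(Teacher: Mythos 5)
Your proposal is correct and takes essentially the same route as the paper, whose entire proof is the remark that it is ``similar to the proof of Theorem~\ref{mainth}''; you actually carry that program out, correctly isolating the two places where $d\equiv 7\pmod{24}$ is needed (namely $d\equiv 7\pmod 8$ for the splitting of $2$, the shape of $\Delta(K_1/K)=\mathfrak{p}_2^{3}$, and the mod-$8$/mod-$32$ arithmetic, and $d\equiv 1\pmod 3$ for the splitting conditions) and supplying the descent $y\sqrt{-d}\equiv 0\pmod 8\Rightarrow 8\mid y$ via the homomorphism $\mathcal{O}_K\to\Z/8\Z$. The one quibble is your passing claim that $\left(\frac{-d}{G_p}\right)=1$ follows from $d\equiv 1\pmod 3$, which is not true in general, but as you yourself note this condition is subsumed in the hypothesis that $G_p$ splits completely in $H_4$, so the argument is unaffected.
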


\section{Conclusion}
   In the case of  Mersenne primes $2^{p}-1$,
     whenever $2^{p}-1$ is a quadratic residue of $7$, then  $2^p-1$ takes the form $x^2+7y^2$.
    Franz Lemmermeyer made an observation that, in this representation 
 $x$ is divisible by $8$ and $y$ leaves the remainder $\pm 3$ when
 divided by $8$. This was later proved by H. W. Lenstra and
 P. Stevenhagen using Artin reciprocity law and presented this
 result on the occasion of birth centenary of Emil Artin on 3 March, 1998 at Universiteit van Amsterdam\cite{henstev:2000:artinreci}.
 In \cite{pal:2012:jis}, authors made   similar observations for rational primes $N(\frac{\alpha^{p}-1}{\alpha-1})$, 
 for $\alpha=2+\sqrt{2}$ and proved using Artin's reciprocity law.
 In this paper, we have made an observation that, if a  Gaussian Mersenne prime is a quadratic residue of $7$, then in the representation of 
 $G_{p}$ as $x^2+7y^2$, we have  $x\equiv \pm 1\pmod{8}$
 and $y$ is divisible by $8$ and proved this result using Artin reciprocity law and generalized this result for 
 $d\equiv 7\pmod{24}$.
\section{Acknowledgement}
  The authors would like to thank Professor P. Stevenhagen,
  Universiteit Leiden, for his valuable insights of class field theory concepts. 
  The authors would also like to thank Professor U.K.Anandavardhanan, IIT Bombay, 
  for helping to improve earlier versions of this manuscript.
The first author would  like to acknowledge  National Board for Higher Mathematics,  Government  of India for funding this work through Post Doctoral Fellowship.

\end{document}